\newtheorem{thm}{Theorem}[section]
\newtheorem{lem}{Lemma}[section]
\theoremstyle{remark}
\newtheorem{rem}{Remark}[section]
\theoremstyle{definition}
\def\namedlabel#1#2{\begingroup
    #2%
    \def\@currentlabel{#2}%
    \phantomsection\label{#1}\endgroup
}
\DeclareMathOperator{\pr}{pr}
\DeclareMathOperator{\leb}{Leb}
\newcommand{\Li}{L_2^{\uparrow}}
\newcommand{\Q}{\mathbb{Q}}
\newcommand{\R}{\mathbb{R}}
\newcommand{\I}{\mathbb{I}}
\newcommand{\N}{\mathbb{N}}
\newcommand{\F}{\mathcal{F}}
\newcommand{\p}{\mathbb{P}}
\newcommand{\Df}{\mathcal{D}}
\newcommand{\mC}{\mathcal{C}}
\newcommand{\RR}{\mathcal{R}}
\newcommand{\E}{\mathbb{E}}
\newcommand{\eps}{\varepsilon}
\newcommand{\cdl}{c\`{a}dl\`{a}g }
\begin{document}

\title
[On number of particles in CFWD]
{On Number of Particles in \\ Coalescing-Fragmentating Wasserstein Dynamics}

\author{Vitalii Konarovskyi}
\address{Faculty of Mathematics, Computer Science and Natural Sciences, University of Hamburg, Bundesstraße 55, 20146 Hamburg, Germany; Institute of Mathematics, University of Leipzig, Augustusplatz 10, 04109 Leipzig, Germany; Institute of Mathematics of NAS of Ukraine, Tereschenkivska st. 3, 01024 Kiev, Ukraine}
\curraddr{Faculty of Mathematics, Computer Science and Natural Sciences, University of Hamburg, Bundesstraße 55, 20146 Hamburg, Germany}
\email{konarovskyi@gmail.com}
\thanks{The work is supported by the Grant ``Leading and Young Scientists Research Support'' No.~2020.02/0303. 
The author thanks the referee for the careful reading of the paper and many valuable comments.}

\subjclass[2020]{Primary 60K35, 60H05 ; Secondary 60H05, 60G44}
\keywords{Sricky-reflected particle system, modified massive Arratia flow, infinite dimensional singular SDE}

\begin{abstract}
We consider the system of sticky-reflected Brownian particles on the real line proposed in~\cite{Konarovskyi:CFWDPA:2017}. The model is a modification of the Howitt-Warren flow but now the diffusion rate of particles is inversely proportional to the mass which they transfer.  It is known that the system consists of a finite number of distinct particles for almost all times. In this paper, we show that the system also admits an infinite number of distinct particles on a dense subset of the time interval if and only if the function responsible for the splitting of particles takes an infinite number of values. 
\end{abstract}

\maketitle

\section{Introduction}%
\label{sec:introduction}

In the paper we study the interacting particle system on the real line which intuitively can be described as follows. Diffusion particles start at some finite or infinite family of points and move independently until their meeting. Every particle transfer a mass and its diffusion rate is inversely proportional to its mass. When particles meet they sticky-reflect from each other. The evolution of the particle system is similar to the motion of particles in the Howitt-Warren flow~\cite{Howitt:2009}. The main difference is that the motion of particles in our system inversely-proportionally depends on their mass. In particular, particles with ``infinitesimally small'' mass have ``infinitely large'' diffusion rate. We call this model the {\it coalescing-fragmentating Wasserstein dynamics} (CFWD).

More precisely, let $X(u,t)$ be a position of particle labeled by $u \in (0,1)$ (we will shortly say ``particle $u$'') at time $t\geq 0$, and $m(u,t)$ be its mass that is the Lebesgue measure $\leb$ of the corresponding cluster $\pi(u,t)=\{ v \in (0,1):\ X(u,t)=X(v,t) \}$. Assume that the diffusion rate of the particle $u$ at time $t$ is inversely proportional to its mass $m(u,t)$. The sticky-reflecting interaction between particles is defined by the drift 
\[
  \xi(u)- \frac{1}{ m(u,t) }\int_{ \pi(u,t) }  \xi(v)dv
\]
with a fixed bounded non-decreasing right-continuous function $\xi$ called the interaction potential. Indeed, if $\xi$ is constant on $\pi(u,t)$ or $\pi(u,t)$ is a one point set then the particle $u$ has no drift. Otherwise, particles which stay together will have different drift for corresponding different values of $\xi$ on $\pi(u,t)$ that makes particles to split. We remark that the order between particles is preserved. Therefore, we may assume that $X(u,t)\leq X(v,t)$ for all $u<v$ and $t\geq 0$.

In~\cite{Konarovskyi:CFWDPA:2017}, we showed that $X$ appears as a martingale solution to the following SDE 
\begin{equation}\label{eq:mainequation}
\begin{split}
  dX_t&=\pr_{X_t}dW_t+(\xi-\pr_{X_t}\xi)dt,\quad t\geq 0,\\
X_0&=g,
\end{split}
\end{equation}
in the space $\Li$ of all square-integrable functions (classes of equivalences) $f:(0,1)\to \R$ which have a non-decreasing version, $W_t$, $t\geq 0$, is a cylindrical Wiener process in $L_2=L_2([0,1],du)$, $X_t:=X(\cdot ,t) \in \Li$, and $\pr_f$ denotes the orthogonal projection in $L_2$ onto its subspace $L_2(f)$ of all $\sigma(f)$-measurable functions. The function $g \in \Li$ describes the initial position of particles. 

  \begin{rem} 
  \label{rem_right_continuous_version}
  For convenience of notation, considering $f \in \Li$ as a function, we will always take its right continuous version on $(0,1)$, which is unique according to, e.g., Proposition~A.1~\cite{Konarovskyi:EJP:2017} and Remark~A.6 ibid. 
  \end{rem}

The existence result in~\cite{Konarovskyi:CFWDPA:2017} claims that for every $g \in \Li$ satisfying $\int_{ 0 }^{ 1 } g^{2+\eps}(u)du<\infty $ for some $\eps>0$ there exist an $L_2$-valued cylindrical Wiener process $W_t$, $t\geq 0$, and a continuous $\Li$-valued process $X_t$, $t\geq 0$, both defined on the same filtered probability space $(\Omega,\F,(\F)_{t\geq 0},\p)$ such that $\E \|X_t\|_{L_2}^2<\infty$, $t\geq 0$, and 
\[
  X_t=g+\int_{ 0 }^{ t } \pr_{X_s}dW_s+\int_{ 0 }^{ t }(\xi-\pr_{X_s}\xi)ds, \quad t\geq 0.
\]
We call such $X_t$, $t\geq 0$, a {\it weak solution} to~\eqref{eq:mainequation} and assume that the process in a mathematical description of the CFWD.

Since $X_t(\omega)$ is a class of equivalences, the value $X_t(u,\omega)$ is not well defined for every $u \in (0,1)$. Note that $X_t(\omega)$ is an element of $\Li$, therefore, one can easily see that there exists its unique right-continuous modification, denoted also by $X_t(\omega)$. In agreement with Remark~\ref{rem_right_continuous_version}, we will hereinafter consider this right-continuous  version of $X$. In general, the process $X_t(u)$, $t\geq 0$, is not continuous. But it turns out that under some regularity conditions of the initial condition $g$ and the interaction potential $\xi$ one can show that $X(u,\cdot )=X_t(u)$ is a continuous semi-martingale which satisfies some natural conditions for each $u$. We will not use those conditions for the proof of our main result stated in Theorem~\ref{thm_main_theorem}, however, we will provide them here to help the reader better understand the particle model. 

Let $\Df\big([0,1],\mC[0,\infty)\big)$ denote the Skorohod space of all \cdl functions from $[0,1]$ to the space $\mC[0,\infty)$ of real-valued continuous functions defined on $[0,\infty)$.  If the initial condition $g$ and the interaction potential $\xi$ are right-continuous and piecewise $(\frac{1}{ 2 }+)$-H\"older continuous\footnote{There exist $\eps>0$ and a finite partition of the interval $[0,1]$ such that the functions are $\left(\frac{1}{ 2 }+\eps\right)$-H\"older continuous on each interval of the partition}, then equation~\eqref{eq:mainequation} admits a weak solution with a modification $\{X(u,t),\ t\geq 0,\ u \in [0,1]\}$ from $\Df\big([0,1],\mC[0,\infty)\big)$ satisfying the following properties
  \begin{enumerate}
    \item [(\namedlabel{R1}{R1})] for all $u \in [0,1]$, $X(u,0)=g(u)$;

    \item [(\namedlabel{R2}{R2})] for each $u<v$ from $[0,1]$ and $t\geq 0$, $X(u,t)\leq X(u,t)$;

    \item [(\namedlabel{R3}{R3})] the process 
      \[
	M(u,t):=X(u,t)-g(u)-\int_{ 0 }^{ t } \left( \xi(u)- \frac{1}{ m(u,s) }\int_{ \pi(u,s) }\xi(v)dv  \right)ds, \quad t\geq 0,
      \]
      is a continuous square integrable martingale with respect to the filtration $\F_t=\sigma(X(v,s),\ v \in [0,1],\ s\leq t)$, $t\geq 0$;

    \item [(\namedlabel{R4}{R4})] the joint quadratic variation of $M(u,\cdot )$ and $M(v,\cdot )$ equals 
      \[
	\langle M(u,\cdot ),M(v,\cdot)\rangle_t= \int_{ 0 }^{ t } \frac{ \I_{\left\{ X(u,s)=X(v,s) \right\}} }{ m(u,s) }ds,\quad t\geq 0.
      \]
  \end{enumerate}

  We remark that the uniqueness of a weak solution to equation~\eqref{eq:mainequation} remains an important open problem. For interested readers we would like to pointed out that the CFWD admits an invariant measure and its reversible version was stydied in~\cite{Konarovskyi:CFWD:2017}. Its connection with the Wasserstein diffusion~\cite{Renesse:2009} and the geometry of the Wasserstein space of probability measures on the real line also were studied there.

  We will denote by $\sharp f$ a number of distinct values of $f \in \Li$, which is well-defined according to Remark~\ref{rem_right_continuous_version}. By Lemma~6.1~\cite{Konarovskyi:EJP:2017}, the square of the Hilbert-Schmidt norm of the orthogonal projection $\pr_{f}$ coinsides with $\sharp f$, i.e.
  \begin{equation} 
  \label{equ_hilber_schimdt_norm}
  \|\pr_f\|_{HS}^2:=\sum_{ n=1 }^{ \infty } \|\pr_{f}e_n\|_{L_2}^2=\sharp f,
  \end{equation}
  where $\{ e_n,\ n\geq 1 \}$ is an orthonormal basis in $L_2$.  Therefore, we can interpret the random variable $\|\pr_{X_t}\|^2_{HS}=\sharp X_t$ as a number of distinct particles in CFWD at time $t\geq 0$. In particular, if $X_t=X(\cdot,t)$, $t\geq 0$, where the random element $\{ X(u,t),\ t\geq 0,\ u \in [0,1] \}$ in $\Df\big([0,1],\mC[0,\infty)\big)$ satisfies conditions~\eqref{R1}-\eqref{R4}, then $\sharp X(\cdot,t)$ is exactly the number of distinct particles at time $t\geq 0$ in the CFWD. Since $X_t$, $t\geq 0$, is square integrable and $\xi$ is bounded, Theorem~2.4~\cite{Gawarecki:2011} and equality~\eqref{equ_hilber_schimdt_norm} imply 
  \[
    \int_{ 0 }^{ t }\E(\sharp X_s) ds<\infty  
  \]
  for all $t\geq 0$. This yields that 
  \begin{equation} 
  \label{equ_finiteness_of_sharp_x}
  \p\left\{ \sharp X_t <\infty\ \mbox{for a.e.}\ t \in [0,\infty) \right\}=1,
  \end{equation}
  i.e. the CFWD consists of a finite number of particles at almost all times with probability~1. The goal of this paper is to show that almost surely there exists a (random) dense subset of the time interval $[0,\infty)$ on which the CFWD has an infinite number of particles if and only if $\sharp\xi=\infty$. We remark that the property $\sharp\xi=\infty$ is equivalent to the fact that $L_2(\xi)$ is infinite dimensional, by~\eqref{equ_hilber_schimdt_norm}.

  \begin{thm} 
  \label{thm_main_theorem}
    \begin{enumerate}
      \item [(i)] If $\sharp\xi=+\infty$, then almost surely there exists a (random) dense subset $S$ of $[0,\infty)$ such that $\sharp X_t=\infty$, $t \in S$, that is,
	\[
	  \p\left\{ \exists S\ \mbox{dense in}\ [0,\infty)\ \mbox{such that}\ \sharp X_t=\infty,\ t \in S \right\}=1.
	\]

      \item [(ii)] If $\sharp\xi<\infty$, then
	\begin{equation} 
  \label{equ_finiteness_of_number_of_particles}
	  \p\left\{ \sharp X_t<\infty, \ \ t \in [0,\infty) \right\}=1.
	\end{equation}
    \end{enumerate}
  \end{thm}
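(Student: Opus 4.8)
The plan is to handle the two parts separately, with part (ii) being the easier warm-up and part (i) carrying the real content.

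For part (ii): when $\sharp\xi = N < \infty$, the interaction potential $\xi$ takes only finitely many values, which partition $[0,1]$ into $N$ intervals. I would argue that the number of distinct particles can never exceed $N$ for all $t\geq 0$ simultaneously, not just for a.e. $t$. The key observation is that clusters can only split along the finitely many ``discontinuity points'' of $\xi$: two particles $u<v$ sitting in a cluster on which $\xi$ is constant feel the same drift and identical noise (since $\pr_{X_s}$ acts the same way on them), hence once they coalesce they stay together forever unless the cluster later straddles a jump point of $\xi$. A cleaner route is to use property~\eqref{R4}: since $\|\pr_{X_t}\|_{HS}^2 = \sharp X_t$ and the Hilbert–Schmidt norm of a projection onto a subspace of $L_2(\xi)$-``compatible'' functions is controlled, one shows that $\sharp X_t \leq \sharp\xi$ whenever the cluster structure is a coarsening of the level-set partition of $\xi$. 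The monotone structure (order preservation, \eqref{R2}) together with the fact that a split requires different values of $\xi$ on the cluster forces $\sharp X_t$ to be bounded by $\sharp\xi = N$ pathwise. I would make this rigorous by tracking, for each of the finitely many pairs of ``neighboring blocks,'' the (continuous) process giving the gap between the corresponding edge particles, and showing the total count stays $\leq N$.

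For part (i): the heart of the matter. I want to show that on a dense set of times the system has infinitely many particles. The natural approach is a \emph{Baire category / dense $G_\delta$ argument combined with a zero-one law or an energy estimate run in reverse}. Fix a countable dense set of rational times $\{q_k\}$ and fix small $\delta>0$; I will show that with probability one, in every interval $(q,q+\delta)$ there is some time $t$ with $\sharp X_t > K$, for every $K$. Suppose toward contradiction that on some interval $I$ of positive length we had $\sharp X_t \leq K$ for all $t\in I$. Then on $I$ the process lives in the (stratified) set of step functions with at most $K$ jumps, and on each such stratum the SDE~\eqref{eq:mainequation} reduces to a finite-dimensional diffusion: the coordinates are the $K$ positions and the masses are locally constant, so $X$ behaves like a finite system of independent Brownian motions with bounded drift and diffusion coefficients $1/m_i$. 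But since $\sharp\xi = \infty$, the drift term $\xi - \pr_{X_s}\xi$ restricted to any cluster on which $\xi$ is non-constant is nonzero and, crucially, has \emph{arbitrarily fine structure}: any cluster of positive mass contains sub-intervals on which $\xi$ takes distinct values, so the ``fragmentating force'' never vanishes. This should force the cluster to split immediately — a cluster of positive Lebesgue measure cannot persist on a time interval of positive length when $\sharp\xi=\infty$, because the differential drift across the cluster is bounded below and would separate the extreme particles. Hence if the number of particles were bounded on $I$, at least one particle would have to carry positive mass throughout $I$, giving the contradiction.

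\textbf{The main obstacle.} The delicate point is quantifying ``a positive-mass cluster must split'' in a way that is compatible with the weak (martingale) formulation, since we do \emph{not} have uniqueness and cannot freely manipulate trajectories $X(u,\cdot)$ for individual $u$. The honest tool is the identity $\int_0^t \E(\sharp X_s)\,ds < \infty$ together with its refinement: I would try to prove a lower bound of the form $\E(\sharp X_t) \to \infty$ along a sequence $t\downarrow 0$ (or after any time), by showing that the drift injects ``energy'' into the Hilbert–Schmidt norm. Concretely, apply Itô's formula (or the energy identity underlying Theorem~2.4~\cite{Gawarecki:2011}) to $\|X_t\|_{L_2}^2$ or to a suitable functional and isolate the contribution $\int_0^t \|\xi - \pr_{X_s}\xi\|_{L_2}^2\,ds$; when $\sharp\xi=\infty$ this contribution forces $\pr_{X_s}$ to ``resolve'' more and more of $\xi$, i.e. $\sharp X_s$ must be large on a non-negligible set of arbitrarily small times — and then a scaling/Markov-type argument promotes ``non-negligible set'' to ``dense set'' via the strong Markov property applied at rational times. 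Combining these into a clean almost-sure dense-set statement, while only invoking \eqref{R1}--\eqref{R4} and the existence theorem, is the step I expect to require the most care.
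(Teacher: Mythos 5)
Your overall shape for part (i) — contradiction plus a Baire-category reduction to an interval of times on which the particle count is bounded, then showing a persistent cluster with non-constant $\xi$ is impossible — matches the paper's strategy, but the mechanism you propose for the final (and decisive) step does not work, and the one that does work is missing. The correct mechanism is not that "the differential drift is bounded below and would separate the extreme particles" per se, nor an energy blow-up of $\E(\sharp X_t)$: the drift $\xi-\pr_{X_s}\xi$ is bounded, so nothing forces $\sharp X_s$ to be large on expectation grounds, and the strong Markov property you invoke is unavailable because uniqueness for \eqref{eq:mainequation} is open. What actually closes the argument is a cancellation in the noise: after reducing (via minimality of the bound $k_2$ and a structural lemma) to an event of positive probability on which $X_t$ is constant on a fixed interval $[u,v)$ for all $t$ in a fixed time interval, one tests against $h=\I_{[(u+v)/2,v)}-\I_{[u,(u+v)/2)}$. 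On that event $\pr_{X_s}h=0$, so the martingale part of $(X_t,h)_{L_2}$ has \emph{zero} quadratic variation increment and is therefore constant there, while the drift contributes $(\xi,h)_{L_2}>0$ per unit time (choosing $[u,v]$ around a point where $\xi$ takes infinitely many values); since $(X_t,h)_{L_2}\equiv 0$ on the event, this is a contradiction. Without identifying that the noise degenerates in the direction $h$, your claim that the cluster "must split immediately" is only heuristic — a priori the noise could balance the drift. You also skip the step showing that a constant Hilbert--Schmidt norm forces the cluster structure (the partition points $q_k(t)$) to be locally constant in $t$, which is what lets you fix a non-random $[u,v)$ at all.

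For part (ii), the idea that clusters inside a level set of $\xi$ never split is the right one, but your stated conclusion $\sharp X_t\le\sharp\xi$ pathwise is false: the initial condition $g$ may already have $\sharp g>\sharp\xi$ (even $=\infty$), and nothing caps the count by $N$. The correct statement is a one-sided coalescing property — for $u,v$ in the same constancy interval $\pi_k$ of $\xi$, the process $(X_t,h_{u,v})_{L_2}$ is a non-negative continuous \emph{supermartingale} (here one needs $(\pr_{X_s}\xi,h_{u,v})_{L_2}\ge 0$, which follows because $\pr_{X_s}$ preserves $\Li$), hence once it hits zero it stays at zero — and this must then be \emph{combined} with the a.e.-in-$t$ finiteness \eqref{equ_finiteness_of_sharp_x} to upgrade to all $t$. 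Your proposal omits both the supermartingale construction (your "identical noise" argument presupposes the pathwise representation \eqref{R1}--\eqref{R4}, which the theorem does not assume for a general weak solution) and the final combination with \eqref{equ_finiteness_of_sharp_x}, so as written it does not yield \eqref{equ_finiteness_of_number_of_particles}.
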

  We note that the CFWD coincides with the modified massive Arratia flow~\cite{Konarovskyi:EJP:2017,Konarovskyi:AP:2017,Konarovskyi:CD:2020,Konarovskyi:CPAM:2019,Marx:2018} for $\xi=0$. In this case, equality~\eqref{equ_finiteness_of_number_of_particles} was stated in~\cite[Proposition~6.2]{Konarovskyi:EJP:2017}.

\section{Auxiliary statements}

Let $\mC\big([a,b],\Li\big)$ denote the space of continuous functions from $[a,b]$ to $\Li$ endowed with the usual topology. We recall that the map $h \mapsto \|\pr_hf\|_{L_2}$ from $\Li$ to $\R$ is lower semi-continuous for each $f \in L_2$, that is, 
\begin{equation} 
  \label{equ_lover_semicontinuity}
  \|\pr_hf\|_{L_2}\leq\varliminf_{n\to \infty}\|\pr_{h_n}f\|_{L_2},\quad \mbox{as}\ \ h_n \to h\ \ \mbox{in}\ \ \Li.
\end{equation}
The proof of this fact can be found in~\cite[Lemma~A.4]{Konarovskyi:CFWDPA:2017}. By Fatou's lemma, the map $h \mapsto \|\pr_{h}\|_{HS}$ is lower semi-continuous as well.

The following lemma is needed for the measurability of events which will appear in the proof of Theorem~\ref{thm_main_theorem}.

\begin{lem}\label{lem:measurability}
For each $[a,b]$, the map $f\mapsto \sup_{t\in [a,b]}\|f_t\|_{HS}$ from $\mC\big([a,b],\Li\big)$ to $\R\cup \{+\infty\}$ is measurable. 
\end{lem}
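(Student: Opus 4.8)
The plan is to exhibit the supremum as a countable combination of measurable functions, using the lower semi-continuity recalled in the paragraph above together with the separability of the path space. First I would recall that for a continuous function $f \in \mC\big([a,b],\Li\big)$, the evaluation map $f \mapsto f_t$ is continuous (hence measurable) for each fixed $t$, and that $h \mapsto \|\pr_h\|_{HS}$ is lower semi-continuous on $\Li$ by the discussion following \eqref{equ_lover_semicontinuity}; composing, $f \mapsto \|f_t\|_{HS}$ is measurable from $\mC\big([a,b],\Li\big)$ to $\R \cup \{+\infty\}$ for each $t$. Consequently $f \mapsto \sup_{t \in D}\|f_t\|_{HS}$ is measurable for every \emph{countable} set $D \subseteq [a,b]$, as a countable supremum of measurable functions.

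The crux is then to show that one may replace the full interval $[a,b]$ by a fixed countable dense subset $D$, i.e.
\[
  \sup_{t \in [a,b]}\|f_t\|_{HS} = \sup_{t \in D}\|f_t\|_{HS} \quad \text{for every } f \in \mC\big([a,b],\Li\big).
\]
The inequality $\geq$ is trivial. For $\leq$, fix $t \in [a,b]$ and a sequence $t_n \in D$ with $t_n \to t$; since $f$ is continuous as a map into $\Li$, we have $f_{t_n} \to f_t$ in $\Li$, and then lower semi-continuity of $\|\pr_\cdot\|_{HS}$ gives $\|f_t\|_{HS} \leq \varliminf_{n} \|f_{t_n}\|_{HS} \leq \sup_{s \in D}\|f_s\|_{HS}$. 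Taking the supremum over $t \in [a,b]$ yields the claim, and hence the measurability of $f \mapsto \sup_{t\in[a,b]}\|f_t\|_{HS}$.

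I expect the main (minor) obstacle to be bookkeeping around the extended-real-valued target: one must make sure that the relevant $\sigma$-algebra on $\R \cup \{+\infty\}$ is the Borel one and that countable suprema of measurable $[-\infty,+\infty]$-valued functions are measurable — all standard, but worth stating cleanly. A second point to check is that the topology on $\mC\big([a,b],\Li\big)$ is indeed the one for which pointwise evaluations are continuous (uniform convergence on $[a,b]$), which is exactly the ``usual topology'' referenced in the text; no separability of the path space is actually needed for the argument, only density of $D$ in $[a,b]$, so the proof is quite short.
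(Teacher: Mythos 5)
Your proposal is correct and follows essentially the same route as the paper: both reduce the supremum over $[a,b]$ to a supremum over a countable dense subset by combining continuity of the path $t\mapsto f_t$ with the lower semi-continuity of $h\mapsto\|\pr_h\|_{HS}$ (the paper phrases this as the sublevel-set identity $\{\sup_{t\in[a,b]}\|f_t\|_{HS}\leq c\}=\bigcap_{t\in[a,b]\cap\Q}\{\|\pr_{f_t}\|_{HS}\leq c\}$, which is equivalent to your equality of suprema). No gaps.
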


\begin{proof}
  Let $t \geq 0$ be fixed. Note that the map $f\mapsto\|\pr_{f_t}\|_{HS}$ from $\mC\big([a,b],\Li\big)$ to $\R$ is lower semi-continuous because it is the composition of the continuous map $\mC\big([a,b],\Li\big)\ni g\mapsto g_t \in \Li$ and the lower semi-continuouous map $\Li \ni h\mapsto \|\pr_{h}\|_{HS} \in \R$. This yields the claim of the lemmas due to the measurability of $f\mapsto\|\pr_{f_t}\|_{HS}$ and the equality 
\begin{align*}
  \{f:\ \sup_{t\in [a,b]}\|f_t\|_{HS}\leq c\}&=\bigcap_{t\in[a,b]\cap\Q}\left\{f:\ \|\pr_{f_t}\|_{HS}\leq c\right\},
\end{align*}
for all $c\geq 0$.
\end{proof}

The following lemma directly follows from the lower semi-continuity of the map $t\mapsto \|\pr_{f_t}\|_{HS}$ for every $f \in \mC\big([0,\infty),\Li\big)$.

\begin{lem}\label{lem:closabilityofA}
  For every $f\in \mC\big([0,\infty),\Li\big)$, $c\geq 0$ and $0 \leq a<b$ the set $A_c^{f,a,b}:=\big\{t\in[a,b]:\ \|\pr_{f_t}\|^2_{HS}\leq c\big\}$ is closed in $[0,\infty)$. 
\end{lem}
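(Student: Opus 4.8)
\textit{Proof proposal.}
The plan is to obtain the statement directly from lower semi-continuity, as the sentence preceding the lemma already hints. First I would set $\varphi(t):=\|\pr_{f_t}\|_{HS}^2$ for $t\in[0,\infty)$ and check that $\varphi$ is lower semi-continuous. This follows by composing the continuous map $[0,\infty)\ni t\mapsto f_t\in\Li$ (continuous precisely because $f\in\mC\big([0,\infty),\Li\big)$) with the lower semi-continuous map $\Li\ni h\mapsto\|\pr_h\|_{HS}\in\R\cup\{+\infty\}$ recalled just after~\eqref{equ_lover_semicontinuity}, and then with the continuous non-decreasing map $x\mapsto x^2$ on $[0,\infty)$; each of these operations preserves lower semi-continuity, so $\varphi$ is lower semi-continuous on $[0,\infty)$.

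Next I would invoke the standard characterisation that a function is lower semi-continuous if and only if all of its sublevel sets are closed; hence $\{t\in[0,\infty):\varphi(t)\leq c\}$ is closed in $[0,\infty)$. Finally, since $[a,b]$ is a closed subset of $[0,\infty)$ and $A_c^{f,a,b}=[a,b]\cap\{t\in[0,\infty):\varphi(t)\leq c\}$, the set $A_c^{f,a,b}$ is an intersection of two closed subsets of $[0,\infty)$ and is therefore itself closed in $[0,\infty)$.

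There is essentially no obstacle here: the only point requiring a line of care is the lower semi-continuity of $t\mapsto\|\pr_{f_t}\|_{HS}$, and this is already reduced in the excerpt to~\eqref{equ_lover_semicontinuity} together with Fatou's lemma. One could alternatively avoid mentioning $x\mapsto x^2$ altogether by writing $\{t:\|\pr_{f_t}\|_{HS}^2\leq c\}=\{t:\|\pr_{f_t}\|_{HS}\leq\sqrt{c}\}$ and working with the unsquared Hilbert--Schmidt norm throughout.
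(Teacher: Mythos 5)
Your proposal is correct and follows essentially the same route as the paper, which simply observes that the lemma is a direct consequence of the lower semi-continuity of $t\mapsto\|\pr_{f_t}\|_{HS}$ (obtained by composing the continuity of $t\mapsto f_t$ with the lower semi-continuity of $h\mapsto\|\pr_h\|_{HS}$) together with the closedness of sublevel sets of lower semi-continuous functions. Your filled-in details, including the remark that one can dispense with $x\mapsto x^2$ by passing to $\sqrt{c}$, are all accurate.
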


We will also need a property of a function $f \in \mC\big([0,\infty),\Li\big)$ if the Hilbert-Schmidt norm $\|\pr_{f_t}\|_{HS}$, $t \in [0,\infty)$, is constant on an interval.

\begin{lem}\label{lem:coalescing}
  Assume that $f$ belongs to $\mC\big([0,\infty),\Li\big)$ and $\|\pr_{f_t}\|_{HS}$, $t\in[a,b]$, is constant for some $0\leq a<b$. Then 
  \begin{enumerate}
    \item [(i)] for every $u_0 \in (0,1)$ there exist $u_1<u_0<u_2$ and $\alpha<\beta$ from $[a,b]$ such that $f_t$ is constant on $[u_1,u_0)$ and $[u_0,u_2)$ for each $t\in[\alpha,\beta]$;

    \item [(ii)] for $u_0=0$ (resp. $u_0$=1) there exist $u_2>u_0$ (resp. $u_1<u_0$) and $\alpha<\beta$ from $[a,b]$ such that $f_t$ is constant on $[u_0,u_2)$ (resp. on $[u_1,u_0]$) for each $t\in[\alpha,\beta]$.
  \end{enumerate}
\end{lem}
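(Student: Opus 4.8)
The plan is to exploit the fact recalled in~\eqref{equ_hilber_schimdt_norm} that $\|\pr_{f_t}\|_{HS}^2=\sharp f_t$, so that the hypothesis says precisely that the number of distinct values $N:=\sharp f_t$ of the right-continuous function $f_t$ is one and the same finite integer for every $t\in[a,b]$. (The constant is understood to be finite; otherwise the statement is false, as the constant path whose value has infinitely many jumps shows.) If $N=1$ then $f_t$ is constant on all of $(0,1)$ for each $t\in[a,b]$ and both assertions are trivial, so I assume $N\geq 2$ and, for each $t\in[a,b]$, write $0=p_0(t)<p_1(t)<\dots<p_{N-1}(t)<p_N(t)=1$ for the breakpoints of $f_t$, i.e. $f_t$ is constant on each half-open interval $[p_{j-1}(t),p_j(t))$ and takes there the $N$ distinct values $v_1(t)<\dots<v_N(t)$.

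\textbf{Step 1 (continuity of the breakpoints).} The core of the proof is to show that each map $p_j\colon[a,b]\to[0,1]$ is continuous. Fix $t_n\to t$ in $[a,b]$. Since $f_{t_n}\to f_t$ in $L_2$ and all functions are non-decreasing, $f_{t_n}(u)\to f_t(u)$ at every continuity point $u$ of $f_t$ (the standard fact for monotone functions: sandwich $f_{t_n}(u)$ between the averages $\tfrac1h\int_{u-h}^u f_{t_n}$ and $\tfrac1h\int_u^{u+h}f_{t_n}$ and let $h\downarrow 0$, using $L_2\subseteq L_1$ convergence on a finite interval). Passing to a subsequence I may assume $p_j(t_n)\to q_j$ for all $j$ and $v_j(t_n)\to w_j\in[-\infty,+\infty]$ for all $j$. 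If two consecutive $q$'s coincided, say $q_{j-1}=q_j=q$, then every continuity point $u<q$ of $f_t$ satisfies $f_{t_n}(u)\in\{v_1(t_n),\dots,v_{j-1}(t_n)\}$ for large $n$, whence $f_t(u)\in\{w_1,\dots,w_{j-1}\}$; similarly $f_t(u)\in\{w_{j+1},\dots,w_N\}$ for continuity points $u>q$. As the continuity points are dense and $f_t$ is right-continuous, $f_t$ would then take at most $N-1$ values, contradicting $\sharp f_t=N$. Hence $0=q_0<q_1<\dots<q_N=1$; then on the now non-degenerate interval $(q_{j-1},q_j)$ one has $f_{t_n}(u)=v_j(t_n)\to w_j$, so $f_t\equiv w_j$ on a dense subset of $(q_{j-1},q_j)$ and, by right-continuity, on $[q_{j-1},q_j)$. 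Since $\sharp f_t=N$ the $w_j$ are pairwise distinct, so by uniqueness of the right-continuous version $q_j=p_j(t)$ for all $j$. The limit being the same along every subsequence, $p_j(t_n)\to p_j(t)$.

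\textbf{Step 2 (a stable time window, conclusion).} Fix $u_0\in(0,1)$ and put $B:=\bigcup_{j=1}^{N-1}p_j^{-1}(\{u_0\})\subseteq[a,b]$, closed by Step~1. If $B\neq[a,b]$, choose a non-degenerate closed interval $[\alpha,\beta]$ inside the non-empty relatively open set $[a,b]\setminus B$; then $\delta:=\min\{|p_j(t)-u_0|:t\in[\alpha,\beta],\ 0\leq j\leq N\}>0$ (a minimum of finitely many positive continuous functions on a compact set; also $\delta\leq u_0\wedge(1-u_0)$ from $j=0,N$), and with $u_1:=u_0-\delta/2$, $u_2:=u_0+\delta/2$ no breakpoint of $f_t$ lies in $(u_1,u_2)$ for $t\in[\alpha,\beta]$, so $f_t$ is constant on $[u_1,u_2)\supseteq[u_1,u_0)\cup[u_0,u_2)$. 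If $B=[a,b]$, then as a finite union of closed sets one of them, say $p_{j_0}^{-1}(\{u_0\})$, has non-empty interior and hence contains a non-degenerate closed interval $[\alpha,\beta]$; there $p_{j_0}\equiv u_0$, while $\delta:=\min\{|p_j(t)-u_0|:t\in[\alpha,\beta],\ 0\leq j\leq N,\ j\neq j_0\}>0$ (the breakpoints are pairwise distinct and $p_0\equiv0$, $p_N\equiv1$), and $u_1:=u_0-\delta/2$, $u_2:=u_0+\delta/2$ then make $u_0$ the only breakpoint of $f_t$ in $(u_1,u_2)$ for each $t\in[\alpha,\beta]$; hence $f_t$ has no breakpoint in $(u_1,u_0)$ or in $(u_0,u_2)$ and is constant on $[u_1,u_0)$ and on $[u_0,u_2)$. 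Part~(ii) is the same argument applied to the single breakpoint $p_1$ (for $u_0=0$), using $\inf_{[a,b]}p_1>0$, or to $p_{N-1}$ (for $u_0=1$), using $\sup_{[a,b]}p_{N-1}<1$, both finite positive by Step~1.

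\textbf{Main obstacle.} The only substantial point is Step~1, and inside it the exclusion of collapsing subsequential limits of the breakpoints: this is exactly where the hypothesis is needed in the strong form that $\sharp f_t=N$ at the limiting time as well, not merely at the $t_n$'s — without it two breakpoints could merge or a whole level could appear or vanish in the limit, and the conclusion would fail.
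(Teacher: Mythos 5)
Your proof is correct and follows essentially the same route as the paper's: decompose $f_t$ into its $N$ level intervals with breakpoints $p_j(t)$, show the breakpoints depend continuously on $t$, and split into the cases where $u_0$ is, or is not, a breakpoint throughout a subinterval of time. The only difference is one of detail — you supply the full argument for the continuity of the $p_j$ (via $L_2$-convergence of monotone functions and the non-collapsing of limits, which the paper simply asserts "follows from continuity of $f_t$") and a Baire/pigeonhole step to fix the breakpoint index, both of which are correct refinements rather than a different approach.
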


\begin{proof}
  Since $\|\pr_{f_{\cdot}}\|_{HS}$ is constant on $[a,b]$, the function $f_t$ takes a fixed number of distinct values, denoted by $n$, for each $t\in[a,b]$, by equality~\eqref{equ_hilber_schimdt_norm}. Let
\[
f_t=\sum_{k=1}^nx_k(t)\I_{[q_{k-1}(t),q_k(t))},\quad t\in[a,b],
\] 
where $x_1(t)<\ldots<x_n(t)$ and $0=q_0(t)<q_1(t)<\ldots<q_n(t)=1$.  From continuity of $f_t$, $t\geq 0$, it follows that the functions $x_k$ and $q_k$ are continuous on $[a,b]$ for each $k$ in $[n]:=\{1,\dots,n\}$.

If there exists $l \in [n]$ such that 
\begin{equation} 
  \label{equ_interval}
  u_0 \in (q_{l-1}(t),q_{l}(t))\quad \mbox{for some}\ \ t \in (a,b),
\end{equation}
then one can take $u_1<u_0<u_2$ and $\alpha<\beta$ from $[a,b]$ satisfying $u_1,u_2$ in $(q_{l-1}(t),q_l(t))$ for all $t \in [\alpha,\beta]$, by the continuity of $q_k$, $k \in [n]$. This trivially implies the statement of the lemma. If $l$ satisfying~\eqref{equ_interval} does not exist, then $u_0=q_l(t)$ for some $l \in [n]\cup \{0\}$ and all $t \in [a,b]$, which also yields the statement.
\end{proof}

\section{Proof of Theorem~\ref{thm_main_theorem}}

We first consider the case $\sharp \xi=\infty$. In order to show that with probability 1 there exists a dense subset $S$ of $[0,\infty)$ such that $\sharp X_t=\infty$ for all $t \in S$, it is enough to prove that for each $0\leq a<b$ one has
\begin{equation} 
  \label{equ_infinite_number_in_proof}
  \p\left\{\sup_{t\in[a,b]}\sharp X_t=\infty\right\}=1.
\end{equation}
Recall that the measurability of $\sup_{t\in[a,b]}\sharp X_t$ follows from Lemma~\ref{lem:measurability} and equality~\eqref{equ_hilber_schimdt_norm}. We suppose that equality~\eqref{equ_infinite_number_in_proof} is false, that is,
$$
\p\left\{\sup_{t\in[a,b]}\sharp X_t<\infty\right\}>0.
$$
Setting $A_n^{a,b}(\omega):=\left\{t\in[a,b]:\ \|\pr_{X_t(\omega)}\|^2_{HS}\leq n\right\}$, $\omega \in \Omega$, and using equality~\eqref{equ_hilber_schimdt_norm}, we can conclude that
$$
\p\left\{\bigcup_{n=1}^{\infty}A^{a,b}_n=[a,b]\right\}>0.
$$ 
By Lemma~\ref{lem:closabilityofA} and the Baire category theorem, we have
$$
\p\big\{\exists a_1<b_1\ \ \mbox{from}\ \ [a,b]\ \  \mbox{and}\ \ n\in\N\ \  \mbox{such that}\ \ \|\pr_{X_t}\|_{HS}^2\leq n,\ \  t\in[a_1,b_1]\big\}>0.
$$
Consequently, we can find non-random $a_1<b_1$ from $[a,b]$ and $k_1\in\N$ such that 
$$
\p\Big\{\|\pr_{X_t}\|_{HS}^2\leq k_1,\ \ t \in [a_1,b_1]\Big\}>0.
$$
Let $k_2 \in [k_1]$ be the minimal number such that
$$
\p\Big\{\|\pr_{X_t}\|_{HS}^2\leq k_2,\ \ t\in[a_1,b_1]\Big\}>0.
$$
By the minimality of $k_2$, we can conclude that
$$
\p\left\{A^{a_1,b_1}_{k_2}\setminus A^{a_1,b_1}_{k_2-1}\not=\emptyset\right\}>0,
$$
where $A^{a_1,b_1}_0=\emptyset$ if $k_2=1$.  Next, since $A^{a_1,b_1}_{k_2}\setminus A^{a_1,b_1}_{k_2-1}$ is open in $A^{a_1,b_1}_{k_2}=[a_1,b_1]$ and non-empty with positive probability, one can find non-random $a_2<b_2$ from $[a_1,b_1]$ satisfying
$$
\p\Big\{\|\pr_{X_t}\|_{HS}^2=k_2,\ \ t\in[a_2,b_2]\Big\}>0.
$$

Next, due to the equality $\sharp\xi=\infty$, there exists $u_0 \in [0,1]$ such that $\xi$ takes an infinite number of distinct values in $[u_1,u_0)$ for all $u_1<u_0$ or in $[u_0,u_2)$ for all $u_2>u_0$. Using Lemma~\ref{lem:coalescing} and the monotonicity of $X_t(\omega)$ for all $t$ and $\omega$, one can find non-random $a_3<b_3$ from $[a_2,b_2]$ and $u<v$ such that $u=u_0$ or $v=u_0$, $\xi$ takes an infinite number of distinct values on $[u,v]$ and
$$
\p\big\{X_t(u)=X_t(\tilde{u}),\  \ \tilde{u} \in [u,v),\ \ t\in[a_3,b_3]\big\}>0.
$$

Let $h:=\I_{[(u+v)/2,v)}-\I_{[u,(u+v)/2)}$. Since $X_t$, $t\geq 0$, solves equation~\eqref{eq:mainequation} and belongs to $\Li$, one has that  $( X_t,h)_{L_2}$, $t\geq 0$, is a continuous non-negative process such that  
$$
M_h(t)=( X_t,h)_{L_2}-\int_0^t\left(\xi-\pr_{X_s}\xi,h\right)_{L_2} ds,\quad t\geq 0,
$$
is a continuous square integrable $(\F_t)$-martingale with quadratic variation 
\[
  \langle M_h\rangle_t=\int_0^t\|\pr_{X_s}h\|_2^2ds,\quad t\geq 0.
\]
We take $\omega$ from the event 
\[
  A:=\left\{\forall t\in[a_3,b_3]\ \  X_t\ \ \mbox{is constant on}\ \ [u,v)\right\},
\]
and note that $( X_t(\omega),h)_{L_2}=0$, $\pr_{X_s(\omega)}h=0$ and
$$
(\pr_{X_s}\xi,h)_{L_2}=(\xi,\pr_{X_s}h)_{L_2}=0
$$ 
for all $s\in[a_3,b_3]$ due to the choice of $h$. Thus, we can conclude that
\begin{equation} 
  \label{equ_mh}
  M_h(t,\omega)=-\int_0^{a_3}(\xi-\pr_{X_s(\omega)}\xi,h)_{L_2} ds-\int_{a_3}^t(\xi,h)_{L_2} ds
\end{equation}
and
$$
\langle M_h\rangle_t(\omega)=\int_0^{a_3}\|\pr_{X_s(\omega)}h\|_2^2ds
$$
for all $t\in[a_3,b_3]$. The equality for the quadratic variation of $M_h$ and the representation of continuous martingales as a time changed Brownian motion (see~\cite[Theorem~II.7.2']{Ikeda:1989}) imply that $M_h(t,\omega)=M_h(a_3,\omega)$, $t\in[a_3,b_3]$ for a.e. $\omega \in A$. Since the non-decreasing function $\xi$ is not a constant on $[u,v]$, the inner product $( \xi,h )_{L_2}$ is strictly positive due to the choice of $h$. According to equality~\eqref{equ_mh}, $M_h(t,\omega)$, $t\in[a_3,b_3]$, is strictly increasing (in $t$) for a.e. $\omega \in A$ because $(\xi,h)_{L_2}>0$. Since $\p\{A\}>0$, we get a contradiction. This completes the proof of the first part of the theorem.

We next prove claim (ii). Due to $\sharp \xi<\infty$, there exists a finite partition $\pi_k$, $k \in [n]$, of the interval $[0,1)$ by intervals of the form $[a,b)$ such that 
\[
  \xi(u)=\sum_{ k=1 }^{ n } \xi_k\I_{\pi_k}(u), \quad u \in [0,1).
\]
In order to prove (ii), it is enough to show that almost surely $X_t$ takes a finite number of distinct values on every interval $\pi_k$. We fix $k \in [n]$ and consider the countable family of functions $h_{u,v}:=\I_{[(u+v)/2,v)}-\I_{[u,(u+v)/2)}$ from $L_2$, $u,v \in \pi_k \cap \Q$, denoted by $\RR$. 

We first remark that for every $h \in \RR$ the process $( X_t,h )_{L_2}$, $t\geq 0$, is a non-negative continuous supermartingale. Indeed, the non-negativity  follows from the inequality $(f,h)_{L_2}\geq 0$ for every $f \in \Li$ and $h \in \RR$. In order to show that $(X_t,h)_{L_2}$, $t\geq 0$, is a supermartingale, we use the fact that it is a weak martingale solution to equation~\eqref{eq:mainequation}. Hence for each $h \in \RR$
\begin{align*}
  ( X_t,h )_{L_2}=M_{h}(t)+ \int_{ 0 }^{ t } \left(\xi-\pr_{X_s}\xi,h\right)_{L_2}ds= M_h(t)-\int_{ 0 }^{ t }  \left(\pr_{X_s}\xi,h\right)_{L_2}ds, \quad t\geq 0,
\end{align*}
where $M_h$ is a martingale. According to Lemma~A.2~\cite{Konarovskyi:CFWDPA:2017}, the orthogonal projection $\pr_f$ maps the space $\Li$ into $\Li$ for every $f \in \Li$. Hence, $\pr_{X_s}\xi \in \Li$ and, therefore, $\left( \pr_{X_s}\xi,h \right)\geq 0$. This implies that $(X_t,h)_{L_2}$, $t\geq 0$, is a continuous supermartingale. 

It is well known that hitting at zero a positive continuous suparmartingales stays there forewer (see e.g. Proposition~II.3.4~\cite{Revuz:1999}). We denote the corresponding event for the supermartingale $( X_t,h )_{L_2}$, $t\geq 0$, by $\Omega_h$, i.e
\[
  \Omega_h=\left\{ 
  \begin{array}{l}
  	\mbox{for every} \ t \in [0,\infty)\ \mbox{the equality}\ ( X_t,h )_{L_2}=0 \\
  	 \mbox{implies} \ (X_s,h)_{L_2}=0 \ \mbox{for all} \ s\geq t \\
  \end{array}
\right\}.
\]
Then $\p\{ \Omega_h \}=1$ for every $h \in \RR$. Thus, the event $\Omega':=\bigcap_{ h \in \RR } \Omega_h$ has the probability~1. Take $\omega \in \Omega'$, $u,v \in (0,1)$, and $t\geq 0$ such that $X_t(u,\omega)=X_t(v,\omega)$. Then for every $h \in \RR$ one has $( X_t(\omega),h )_{L_2}=0$ and, consequently, $( X_s(\omega),h )_{L_2}=0$ for all $s\geq t$, by the choice of $\omega$. Using the right continuity of $X_s(\cdot,\omega)$ (see Remark~\ref{rem_right_continuous_version}), it is easily seen that $X_s(u)=X_s(v)$, $s\geq t$. In other words, the process $X_t$, $t\geq 0$, satisfies the following property: if $X_t$ is constant on an interval $[u,v] \in \pi_k$ for some $k \in [n]$, then it remains constant on this interval for every $s\geq t$.  Combining this coalescing property of $X_t$, $t\geq 0$, on every interval $\pi_k$, $k \in [n]$ with equality~\eqref{equ_finiteness_of_sharp_x}, we get claim (ii) of the theorem.


\begin{thebibliography}{10}

\bibitem{Gawarecki:2011}
Leszek Gawarecki and Vidyadhar Mandrekar, \emph{Stochastic differential
  equations in infinite dimensions with applications to stochastic partial
  differential equations}, Probability and its Applications (New York),
  Springer, Heidelberg, 2011. \MR{2560625}

\bibitem{Howitt:2009}
Chris Howitt and Jon Warren, \emph{Consistent families of {B}rownian motions
  and stochastic flows of kernels}, Ann. Probab. \textbf{37} (2009), no.~4,
  1237--1272. \MR{2546745}

\bibitem{Ikeda:1989}
Nobuyuki Ikeda and Shinzo Watanabe, \emph{Stochastic differential equations and
  diffusion processes}, second ed., North-Holland Mathematical Library,
  vol.~24, North-Holland Publishing Co., Amsterdam; Kodansha, Ltd., Tokyo,
  1989. \MR{1011252}

\bibitem{Konarovskyi:CFWDPA:2017}
Vitalii Konarovskyi, \emph{{C}oalescing-{F}ragmentating {W}asserstein
  {D}ynamics: particle approach}, arXiv:1711.03011 (2017).

\bibitem{Konarovskyi:EJP:2017}
\bysame, \emph{On asymptotic behavior of the modified {A}rratia flow},
  Electron. J. Probab. \textbf{22} (2017), Paper No. 19, 31. \MR{3622889}

\bibitem{Konarovskyi:AP:2017}
\bysame, \emph{A system of coalescing heavy diffusion particles on the real
  line}, Ann. Probab. \textbf{45} (2017), no.~5, 3293--3335. \MR{3706744}

\bibitem{Konarovskyi:CD:2020}
Vitalii Konarovskyi and Victor Marx, \emph{On conditioning brownian particles
  to coalesce}, arXiv:2008.02568 (2020).

\bibitem{Konarovskyi:CFWD:2017}
Vitalii Konarovskyi and Max von Renesse, \emph{Reversible
  {C}oalescing-{F}ragmentating {W}asserstein {D}ynamics on the {R}eal {L}ine},
  arXiv:1709.02839 (2017).

\bibitem{Konarovskyi:CPAM:2019}
Vitalii Konarovskyi and Max-K. von Renesse, \emph{Modified massive {A}rratia
  flow and {W}asserstein diffusion}, Comm. Pure Appl. Math. \textbf{72} (2019),
  no.~4, 764--800. \MR{3914882}

\bibitem{Marx:2018}
Victor Marx, \emph{A new approach for the construction of a {W}asserstein
  diffusion}, Electron. J. Probab. \textbf{23} (2018), Paper No. 124, 54.
  \MR{3896861}

\bibitem{Revuz:1999}
Daniel Revuz and Marc Yor, \emph{Continuous martingales and {B}rownian motion},
  third ed., Grundlehren der Mathematischen Wissenschaften [Fundamental
  Principles of Mathematical Sciences], vol. 293, Springer-Verlag, Berlin,
  1999. \MR{1725357}

\bibitem{Renesse:2009}
Max-K. von Renesse and Karl-Theodor Sturm, \emph{Entropic measure and
  {W}asserstein diffusion}, Ann. Probab. \textbf{37} (2009), no.~3, 1114--1191.
  \MR{2537551}

\end{thebibliography}

\providecommand{\bysame}{\leavevmode\hbox to3em{\hrulefill}\thinspace}
\providecommand{\MR}{\relax\ifhmode\unskip\space\fi MR }
\providecommand{\MRhref}[2]{%
  \href{http://www.ams.org/mathscinet-getitem?mr=#1}{#2}
}
\providecommand{\href}[2]{#2}

\end{document}